\documentclass{svproc}

\usepackage{graphicx}
\usepackage{amsmath, amssymb}
\usepackage{booktabs}
\DeclareMathOperator{\rk}{rk}
\usepackage{url}

\makeatletter
\let\ps@plain\ps@empty
\makeatother

\begin{document}
\mainmatter  

\title{Structural Redundancy in Subspace Network Coding via Atomic Decompositions}
\titlerunning{Subspace Network Coding via Redundant Atomic Decompositions}

\author{%
David Ramirez\inst{1} \and
Elvis Cabrera\inst{2} \and
Jyrko Correa-Morris\inst{1}
}

\authorrunning{David Ramirez et al.}

\institute{%
Miami Dade College, 627 SW 27th Ave, Miami, FL 33135, USA \\
\email{david.ramirez033@mymdc.net, jcorrea7@mdc.edu}
\and
Massachusetts Institute of Technology, Cambridge, MA 02139, USA \\
\email{elviscal@mit.edu}
}

\maketitle

\begin{abstract}
Random linear network coding (RLNC) provides a powerful framework for non-coherent communication, where reliable transmission requires correcting errors and erasures induced by network mixing and motivates the use of subspace codes. In this work, we introduce an atomic perspective on subspace coding by formalizing the notion of minimal atomic decompositions in the lattice $\mathcal{L}(V)$ of subspaces of a finite-dimensional vector space over a finite field. We study the function $\mathfrak{N}$ that assigns to each subspace the number of its minimal atomic decompositions and establish its key structural properties.

Leveraging $\mathfrak{N}$, we define a new distance metric on $\mathcal{L}(V)$ that refines classical subspace comparisons by capturing atomic-level overlap. We then introduce the \emph{Atomic Operator Channel}, a transmission model for RLNC in which codewords are conveyed through atomic decompositions and corruption is modeled via atomic insertions and erasures. Within this framework, we prove a minimum-distance decoding guarantee for the induced metric. In the constant-dimension setting, we show that the classical unique-decodability condition under the subspace distance remains sufficient for unique decoding under the atomic metric.

\end{abstract}
\keywords{Subspace coding, network coding, atomic decompositions, operator channel, distance metrics, minimum-distance decoding}

\section{Introduction}
Reliable communication of linear information over unreliable channels is a central problem in modern communication theory. In many contemporary systems\allowbreak---such as network coding,
distributed storage, and wireless multicast\allowbreak---information is naturally modeled not as a sequence of symbols, but as a linear subspace of a vector space over a finite field. This perspective has led to a rich body of work on subspace channels and subspace codes, where distortion is typically quantified through dimension loss or rank-based metrics.

Within this framework, classical models describe channel impairments through subspace erasures and insertions, and decoding performance is analyzed using distances that depend only on the dimensions of transmitted and received subspaces and their intersection. These approaches have proven highly effective and have led to powerful coding constructions and capacity results. Nevertheless, they treat subspaces as indivisible objects and deliberately abstract away their internal structure.

From a geometric viewpoint, however, a subspace possesses nontrivial internal combinatorial structure. In particular, a subspace may admit many distinct minimal decompositions into one-dimensional subspaces (atoms). Existing models regard all such decompositions as equivalent, even though channel impairments may affect different atomic directions in fundamentally different ways. As a consequence, two subspaces that are indistinguishable under classical dimension-based distances may differ substantially in their internal atomic structure, a distinction that current models are unable to capture.

Motivated by this observation, we introduce a new channel abstraction, which we refer to as the \emph{Atomic Operator Channel}. In this model, the channel acts directly on the atomic structure of the transmitted subspace: individual one-dimensional subspaces may be deleted, and new one-dimensional subspaces may be inserted. The receiver observes only the resulting subspace after these atomic operations. Importantly, atomic deletions do not necessarily reduce the dimension of the transmitted subspace, and atomic insertions may introduce structured distortions that are invisible to purely dimension-based models.

To analyze this channel, we propose a decomposition-based invariant. For a subspace $S$, let $\mathfrak{N}(S)$ denote the number of minimal atomic decompositions of $S$. We show that this function is supermodular on the lattice of subspaces. This structural property allows us to define a distance
\[
d_{\mathfrak{N}}(S,T) = \mathfrak{N}(S) + \mathfrak{N}(T) - 2\mathfrak{N}(S \cap T),
\]
and to prove that $d_\mathfrak{N}$ is a metric on the lattice of subspaces of a vector space over a finite field $\mathbb{F}$. Unlike classical subspace distances, this metric quantifies distortion in terms of the loss of decomposition richness rather than solely through dimension. As a result, it enables a refined notion of redundancy, where robustness is encoded in the multiplicity of minimal atomic realizations rather than in dimension alone.

Within this framework, coding and decoding can be formulated using standard minimum-distance principles with respect to the proposed metric. Redundant atomic structure plays a role analogous to redundancy in classical coding theory, but at the level of one-dimensional subspaces rather than individual symbols. This provides a new perspective on robustness against structured degradations that complements existing subspace-based models.

The main contributions of this paper are as follows:
\begin{enumerate}
    \item We introduce the Atomic Operator Channel, a subspace channel model that captures atomic-level erasures and insertions.
    \item We define an invariant based on minimal atomic decompositions and prove its supermodularity on the subspace lattice.
    \item We construct a decomposition-based metric on subspaces and establish its fundamental properties.
    \item We show how this metric enables a refined notion of redundancy and minimum-distance decoding beyond dimension-based approaches.
\end{enumerate}

The remainder of the paper is organized as follows. 
Section~\ref{sec:preliminaries} reviews the relevant background on subspace network coding. 
Section~\ref{sec:minimal_atomic_decompositions} investigates the main structural properties of the function $\mathfrak{N}$, which assigns to each subspace $S$ the total number of its minimal atomic decompositions. 
Section~\ref{sec:atomic_channel} formalizes the Atomic Operator Channel. 
Section~\ref{sec:metric} introduces the $\mathfrak{N}$-based metric and establishes its fundamental properties. 
Section~\ref{sec:codeundermetric} analyzes the main properties of codes under the $\mathfrak{N}$-based metric. 
Section~\ref{sec:minimum-distance-decoder} discusses the corresponding minimum-distance decoder. 
Section~\ref{sec:singleton-bound} proposes a Singleton-type bound with respect to the $\mathfrak{N}$-metric. 
Section~\ref{sec:comparison} compares the $\mathfrak{N}$-metric with dimension-based metrics, and Section~\ref{sec:conclusion} concludes the paper.

\section{Related Work} \label{sec:preliminaries}
Random linear network coding (RLNC) has attracted sustained attention as a robust
strategy for transmitting information in \emph{non-coherent} communication
networks, where the internal structure and dynamics of the network are unknown.
In RLNC, intermediate nodes do not merely forward packets; instead, they transmit
random linear combinations of the packets they receive over a finite field
$\mathbb{F}_q$. While this mixing mechanism enables efficient operation under
packet loss and changing routes, it also fundamentally alters the nature of error
control: perturbations introduced at any point in the network propagate through
subsequent linear combinations. As a result, a central objective in RLNC is the
development of reliable \emph{error and erasure correction} principles that remain
valid in this algebraic, non-coherent setting.

A key observation is that, from the receiver’s viewpoint, the information
contained in the collected linear combinations is naturally captured by the
subspace they span rather than by individual vectors. This motivates
\emph{subspace coding} as the appropriate framework for reliability in RLNC
\cite{koetter2008coding}. The foundational work of R.~K\"otter and
F.~R.~Kschischang \cite{koetter2008coding} formalized this viewpoint through the
operator channel model, in which a transmitted subspace may lose dimensions
(erasures) and gain an additional error component (insertions). Decoding then
amounts to recovering the transmitted subspace from the received one, with
geometric structure entering explicitly via distances on projective spaces and
Grassmannians.

Since the introduction of the operator channel, the study of subspace codes has
developed into a broad area combining algebraic, geometric, and combinatorial
techniques. A particularly prominent direction concerns \emph{constant dimension
codes} (CDCs), where all codewords have the same dimension. CDCs are attractive
both for their clean geometric structure and for their compatibility with many
RLNC protocols. This line of research includes deep connections to rank-metric
coding and corresponding decoding viewpoints \cite{silva2008rank,bartz2022rankmetric},
explicit families with strong optimality properties such as spread-based
constructions \cite{manganiello2008spread,manganiello2011spread}, and algorithmic
questions including efficient message encoding and retrieval for spread and
orbit-type codes \cite{trautmann2014message,horlemann2018message}. Further advances
include inserting-type and multilevel constructions \cite{heinlein2023improved,li2025inserting},
recent developments for related families such as flag codes \cite{han2025flag},
and comprehensive surveys of constructions and bounds for subspace codes
\cite{kurz2025bounds}.

In much of this literature, distortion is measured by the classical
\emph{subspace distance} introduced in \cite{koetter2008coding},
$$d(U,V)=\dim(U)+\dim(V)-2\dim(U\cap V),$$
or by closely related dimension-based metrics such as the injection distance.
These distances underlie minimum-distance decoding guarantees and many standard
bounds and constructions for subspace codes, and they interact naturally with
models of insertions and erasures, including approaches that combine subspace
codes with rank-metric techniques \cite{horlemann2023insertion}. However, such
metrics depend only on dimensional parameters and do not capture finer internal
structure within subspaces.

This abstraction is both deliberate and effective: by collapsing internal
structure into dimension and intersection dimension, classical models provide a
\allowbreak tractable and robust framework for analysis. At the same time, this coarsening
implies that two subspaces of equal dimension and equal intersection dimension
with a received subspace are regarded as equally distant, even if their internal
generating structures differ substantially. In particular, dimension-based
metrics are insensitive to \emph{directional losses} that do not reduce
dimension—for example, the loss of specific one-dimensional components that are
replaced by alternative generating directions.

From an operational perspective, such losses may significantly affect
robustness or recoverability, especially when redundancy is encoded internally
rather than through increased dimension. Classical subspace coding typically
treats redundancy as an external resource, achieved by enlarging the ambient
dimension or increasing separation under dimension-based metrics. In contrast,
redundancy arising from multiple minimal representations of the same subspace—an
intrinsic combinatorial property—is invisible to these models. As a consequence,
two subspaces of equal dimension may exhibit markedly different resilience to
structured perturbations while remaining indistinguishable under existing
distances.

These observations do not indicate deficiencies in existing subspace models, but
rather delineate the scope of their abstraction. Dimension-based distances are
well suited to scenarios in which degradation is assumed to be generic or
unstructured. However, in settings where impairments act sparsely or
directionally, or where redundancy is encoded through multiple minimal
representations, a more refined description becomes relevant. This motivates the
atomic perspective adopted in this work, which preserves the subspace-based
framework while exposing internal structural features that are invisible to
purely dimension-based approaches.

We also relate the proposed framework to the established geometry of constant-dimension coding. 
In the constant-dimension regime, we show that the $\mathfrak{N}$-induced distance is compatible with the classical correction paradigm: whenever a received subspace admits a unique nearest codeword under subspace-distance decoding, it is also uniquely correctable under $d_{\mathfrak{N}}$-based decoding. 

At the same time, codes with varying codeword dimensions have been studied extensively, both from the perspective of constructions and from the standpoint of upper bounds and fundamental limitations \cite{kurz2025bounds,honold2019johnson}. 
This setting is particularly relevant in non-coherent communication, where mixed-dimension effects arise naturally and many known bounds are still formulated in terms of dimension-based metrics. 

In this mixed-dimension context, we demonstrate that the atomic viewpoint can distinguish competing candidates that are indistinguishable under dimension-only information, yielding explicit scenarios in which atomic-structure decoding succeeds while classical metrics lack sufficient resolution.

\section{Minimal Atomic Decompositions in $\mathcal{L}(V)$}
\label{sec:minimal_atomic_decompositions}

Let $\mathcal L(V)$ denote the lattice of subspaces of an $m$-dimensional vector
space $V$ over the finite field $\mathbb{F}_q$, ordered by inclusion, with join operator $\vee$
given by subspace sum and meet operator $\wedge$ given by intersection.
That is, for subspaces $S_1,S_2 \subseteq V$,
\(S_1 \vee S_2 = S_1 + S_2\) and 
\(S_1 \wedge S_2 = S_1 \cap S_2
\).
The atoms of $\mathcal L(V)$, denoted by $\mathcal A$, are the one-dimensional
subspaces of $V$.

\subsection{Atomic Decompositions}

\begin{definition}
Let $S \subseteq V$ be a subspace.
A subset
\(
\mathfrak a_S = \{A_1,A_2,\ldots,A_k\} \subseteq \mathcal A
\)
is called an \emph{atomic decomposition} of $S$ if $\mathfrak a_S$ spans $S$, i.e.,
\[
\bigvee_{i=1}^k A_i=A_1 + A_2 + \cdots + A_k = S.
\]
The decomposition $\mathfrak a_S$ is said to be \emph{minimal} if no proper subset
of $\mathfrak a_S$ is an atomic decomposition of $S$.
\end{definition}

Minimal atomic decompositions correspond to irredundant representations of a
subspace in terms of one-dimensional components and play a central role in the
structural analysis developed in this paper.

\subsection{Characterization of Minimal Atomic Decompositions} 

\begin{lemma}
A subset $\mathfrak a_S=\{A_1,A_2,\dots,A_k\}\subset \mathcal A$ is a minimal atomic
decomposition of the subspace $S$ if and only if every choice of vectors
$\{v_1,v_2,\ldots,v_k\}$ with $v_i\in A_i$ and $v_i\neq 0$ forms a basis of $S$.
\end{lemma}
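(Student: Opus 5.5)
The plan is to reduce everything to the elementary fact that spanning by atoms is the same as spanning by any choice of nonzero representatives. Fix a subset $\mathfrak a_S=\{A_1,\dots,A_k\}$ of distinct atoms. Each $A_i$ is one-dimensional, so for any nonzero $v_i\in A_i$ we have $A_i=\langle v_i\rangle$. Consequently, for every subset $I\subseteq\{1,\dots,k\}$,
\[
\bigvee_{i\in I}A_i=\operatorname{span}\{v_i : i\in I\}.
\]
This identity holds independently of the choice of representatives, and it is the only tool needed. Note also that the vectors $v_i$ are pairwise non-collinear because the atoms are distinct, so $\{v_1,\dots,v_k\}$ genuinely has $k$ elements.

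For the forward direction, assume $\mathfrak a_S$ is a minimal atomic decomposition and choose arbitrary nonzero representatives $v_i\in A_i$. By the identity, the $v_i$ span $S$, so it remains to show they are linearly independent. Suppose instead that some $v_j$ lies in the span of the others. Then $A_j\subseteq\bigvee_{i\neq j}A_i$, and so
\[
\bigvee_{i\neq j}A_i=S .
\]
Thus $\mathfrak a_S\setminus\{A_j\}$ is a proper subset that is still an atomic decomposition of $S$, which contradicts minimality. Hence $\{v_1,\dots,v_k\}$ is a basis of $S$.

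For the converse, assume every choice of representatives is a basis of $S$; it suffices to use one such choice. Since the $v_i$ span $S$, $\mathfrak a_S$ is an atomic decomposition. If a proper subset indexed by $I\subsetneq\{1,\dots,k\}$ also spanned $S$, then $\{v_i : i\in I\}$ would be a spanning set of $S$ of size $|I|<k=\dim S$, which is impossible. Alternatively, any $v_j$ with $j\notin I$ would lie in the span of the remaining vectors, contradicting independence. Hence $\mathfrak a_S$ is minimal.

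There is no real obstacle in this argument. The only point requiring care is the bookkeeping that distinct atoms give distinct, non-collinear vectors, so that ``basis'' and the count $k=\dim S$ are consistent.
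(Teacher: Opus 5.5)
Your proof is correct and essentially matches the paper's. In both, a nontrivial linear relation among the chosen representatives forces some atom into the span of the others, contradicting minimality; the paper phrases this through uniqueness of the representation of $u\in S$ (i.e.\ directness of $A_1\oplus\cdots\oplus A_k$), whereas you argue directly from linear dependence, and your converse is the paper's, with the count $|I|<k=\dim S$ supplying the justification it leaves implicit in ``no proper subset can do so.''
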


\begin{proof}
Suppose $\mathfrak a_S=\{A_1,\dots,A_k\}$ is a minimal atomic decomposition of $S$.
Then necessarily
\(
S = A_1 \oplus \cdots \oplus A_k.
\)
Let $u\in S$ admit two decompositions
$u=v_1+\cdots+v_k=\hat v_1+\cdots+\hat v_k$ with $v_i,\hat v_i\in A_i$.
Setting $w_i=v_i-\hat v_i\in A_i$, we have $\sum_i w_i=0$.
If some $w_{i_0}\neq 0$, then $A_{i_0}$ lies in the span of the remaining atoms,
contradicting minimality.
Hence all $w_i=0$, so the decomposition is unique.
It follows that any choice of nonzero vectors $v_i\in A_i$ is linearly independent
and spans $S$, and thus forms a basis.

Conversely, if every such choice $\{v_1,\ldots,v_k\}$ forms a basis of $S$, then
$\mathfrak a_S$ spans $S$, and no proper subset can do so.
Thus $\mathfrak a_S$ is minimal.
\end{proof}

\subsection{Counting Minimal Atomic Decompositions}

Let $\mathcal B$ denote the set of all unordered bases of $S$.
Define an equivalence relation $\sim$ on $\mathcal B$ by declaring
\(
B \sim B'
\quad\text{if and only if}\quad
\{\langle v\rangle : v\in B\} = \{\langle v'\rangle : v'\in B'\}.
\)
That is, two bases are equivalent if they determine the same collection of
one-dimensional subspaces.

\begin{definition}
For a subspace $S\subseteq V$, let $\mathfrak N(S)$ denote the number of minimal
atomic decompositions of $S$.
\end{definition}

\begin{lemma}\label{lem:N-count}
Let $S$ be an $n$-dimensional subspace of $V$.
Then
\[
\mathfrak N(S) = |\mathcal B/\!\sim| = \frac{|\mathcal B|}{(q-1)^n}.
\]
\end{lemma}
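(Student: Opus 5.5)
The plan is to exhibit a surjection from the set $\mathcal B$ of unordered bases of $S$ onto the set of minimal atomic decompositions of $S$, identify its fibres with the classes of $\sim$, and then show that every fibre has exactly $(q-1)^n$ elements.

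\textbf{Step 1: the map.} Define
\[
\Phi:\mathcal B\to\{\text{minimal atomic decompositions of }S\},\qquad \Phi(B)=\{\langle v\rangle : v\in B\}.
\]
We must check that $\Phi$ is well defined. If $B=\{v_1,\dots,v_n\}$ is a basis, the lines $\langle v_i\rangle$ are pairwise distinct, since the $v_i$ are independent. So $\Phi(B)$ is a set of $n$ atoms whose sum is $S$. Every choice of nonzero vectors in these atoms has the form $\{c_1v_1,\dots,c_nv_n\}$ with all $c_i\neq0$, and such a set is again a basis. By the preceding characterization lemma, $\Phi(B)$ is therefore a minimal atomic decomposition.

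\textbf{Step 2: surjectivity and fibres.} Let $\{A_1,\dots,A_k\}$ be a minimal atomic decomposition. By the same lemma, any choice of nonzero $v_i\in A_i$ is a basis $B$ of $S$, and $\Phi(B)=\{A_1,\dots,A_k\}$. Hence $\Phi$ is surjective, and $k=n$ because it equals the size of a basis. By the definition of $\sim$, we have $B\sim B'$ exactly when $\Phi(B)=\Phi(B')$. So $\Phi$ induces a bijection $\mathcal B/\!\sim\ \to$ \{minimal atomic decompositions of $S$\}, which gives $\mathfrak N(S)=|\mathcal B/\!\sim|$.

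\textbf{Step 3: fibre size.} Fix a decomposition $\{A_1,\dots,A_n\}$. By the lemma, its preimage under $\Phi$ is the set of unordered sets $\{v_1,\dots,v_n\}$ with $v_i\in A_i\setminus\{0\}$. Each $A_i$ contains exactly $q-1$ nonzero vectors, so there are $(q-1)^n$ choices of the tuple $(v_1,\dots,v_n)$.

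This step is where care is needed, because $\mathcal B$ consists of \emph{unordered} bases. We must make sure that distinct tuples give distinct unordered sets. Since the atoms $A_i$ are distinct and pairwise intersect only in $0$, each vector of the set lies in exactly one $A_i$. The tuple is therefore recovered from the set, so there is no overcounting. Every equivalence class thus has exactly $(q-1)^n$ elements, and $|\mathcal B|=(q-1)^n\,|\mathcal B/\!\sim|$, which is the claimed formula.

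For $n=0$ all three quantities equal $1$: the empty basis and the empty decomposition. So the formula holds trivially in that case.
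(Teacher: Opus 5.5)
Your proof is correct and follows the same route as the paper. You identify each $\sim$-class with the minimal atomic decomposition it determines, then count the $(q-1)^n$ unordered bases obtained by choosing a nonzero generator in each atom; beyond that you only make explicit what the paper leaves implicit, namely that $\Phi$ lands in minimal decompositions, that it is surjective, and that distinct choices of generators give distinct unordered bases because the atoms meet pairwise only in $0$.
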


\begin{proof}
Each equivalence class in $\mathcal B/\!\sim$ corresponds to a unique minimal
atomic decomposition of $S$.
Conversely, each minimal atomic decomposition determines $(q-1)^n$ unordered
bases obtained by choosing nonzero generators in each one-dimensional atom.
Thus $|\mathcal B/\!\sim| = |\mathcal B|/(q-1)^n$.
\end{proof}

\begin{theorem}
Let $S$ be an $n$-dimensional subspace of $V$ over $\mathbb F_q$.
Then
\[
\mathfrak N(S)
= \frac{\prod_{i=0}^{n-1}(q^n - q^i)}{n!\,(q-1)^n}.
\]
\end{theorem}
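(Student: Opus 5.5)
By Lemma~\ref{lem:N-count}, it suffices to compute $|\mathcal B|$, the number of unordered bases of an $n$-dimensional subspace $S$, and then divide by $(q-1)^n$. Since $S\cong\mathbb F_q^n$ as a vector space, the count depends only on $n$ and $q$. I will first count \emph{ordered} bases and then pass to unordered ones.

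For ordered bases $(v_1,\dots,v_n)$ I will choose the vectors one at a time. The vector $v_1$ can be any nonzero vector of $S$, giving $q^n-1=q^n-q^0$ choices. Having chosen linearly independent $v_1,\dots,v_i$, the next vector $v_{i+1}$ must avoid their span, which has exactly $q^i$ elements; this leaves $q^n-q^i$ choices. Independence is preserved at each step, and $n$ independent vectors in an $n$-dimensional space form a basis. Multiplying the choices, the number of ordered bases is $\prod_{i=0}^{n-1}(q^n-q^i)$, the order of $GL_n(\mathbb F_q)$.

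Next I pass to unordered bases. The vectors of a basis are distinct, so each unordered basis corresponds to exactly $n!$ orderings. Hence
\[
|\mathcal B| = \frac{1}{n!}\prod_{i=0}^{n-1}(q^n-q^i).
\]
Substituting this into Lemma~\ref{lem:N-count} gives the stated formula.

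The only step needing care is the $(q-1)^n$ division, and Lemma~\ref{lem:N-count} already handles it. As a consistency check, I would also verify the count directly on the atom side. Each minimal atomic decomposition is a set of $n$ distinct atoms, so an ordered choice of generators, one nonzero vector per atom in $(q-1)^n$ ways, combined with an ordering of the atoms, recovers each ordered basis exactly once. This confirms that $n!\,(q-1)^n$ ordered bases correspond to each decomposition, so the quotient is an integer and the formula holds. I do not expect a genuine obstacle; the main point is making sure that the ordering count and the scaling count are each applied exactly once.
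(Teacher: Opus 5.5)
Your proposal is correct and follows essentially the same route as the paper: count ordered bases as $|GL_n(\mathbb F_q)|=\prod_{i=0}^{n-1}(q^n-q^i)$, divide by $n!$ to get unordered bases, and then invoke Lemma~\ref{lem:N-count} for the division by $(q-1)^n$. You additionally derive $|GL_n(\mathbb F_q)|$ vector by vector, which the paper takes as a standard fact.
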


\begin{proof}
The number of ordered bases of $S$ is $|GL_n(\mathbb F_q)|=\prod_{i=0}^{n-1}(q^n-q^i)$.
Dividing by $n!$ accounts for unordered bases, and dividing by $(q-1)^n$ accounts
for scalar rescalings of basis vectors within each atom.
The result follows from Lemma~\ref{lem:N-count}.
\end{proof}
\begin{corollary}\label{Isotonic} $\mathfrak{N}$ is an isotone function; that is, for all subspaces $S,T\subseteq V$, $S\subseteq T$ implies $\mathfrak{N}(S)\le \mathfrak{N}(T)$.
\end{corollary}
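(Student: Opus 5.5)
Since $\mathfrak N(S)$ depends only on $n=\dim S$ by the preceding theorem, and $S\subseteq T$ forces $\dim S\le \dim T$, it suffices to show that the sequence
\[
f(n)=\frac{\prod_{i=0}^{n-1}(q^n-q^i)}{n!\,(q-1)^n}, \qquad n\ge 0,
\]
is nondecreasing in $n$ (with $f(0)=1$, the empty decomposition of the zero subspace). I will prove $f(n+1)\ge f(n)$ for every $n\ge 0$ and then chain these inequalities from $\dim S$ up to $\dim T$.

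The key step is to compute the ratio $f(n+1)/f(n)$ in a usable form. First rewrite
\[
\prod_{i=0}^{n-1}(q^n-q^i)=q^{\binom n2}\prod_{j=1}^{n}(q^j-1),
\]
so that
\[
f(n)=\frac{q^{\binom n2}}{n!}\prod_{j=1}^{n}[j]_q, \qquad\text{where } [j]_q=\frac{q^j-1}{q-1}=1+q+\cdots+q^{j-1}.
\]
Then
\[
\frac{f(n+1)}{f(n)}=\frac{q^{n}\,[n+1]_q}{n+1}.
\]
Since $q\ge 2$, each of the $n+1$ summands of $[n+1]_q$ is at least $1$, so $[n+1]_q\ge n+1$. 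Also $q^n\ge 1$. Hence the ratio is at least $1$, i.e. $f(n+1)\ge f(n)$, with strict inequality once $n\ge 1$.

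The only point needing care, and hence the main obstacle, is bookkeeping in the ratio: the shift of the exponent $\binom{n+1}{2}-\binom n2=n$, and the factor $n+1$ coming from the factorial. As a sanity check on the formula, I would verify the small cases $f(0)=f(1)=1$ and $f(2)=q(q+1)/2$, which agrees with the count of unordered pairs of distinct lines in a plane. The case $S=\{0\}$, where $\mathfrak N=1\le \mathfrak N(T)$, is covered by the $n=0$ step.
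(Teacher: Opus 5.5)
Your proof is correct, but it takes a different route from the paper. The paper's proof never uses the closed formula. It extends each minimal atomic decomposition $\mathfrak a$ of $S$ to a minimal atomic decomposition of $T$, for instance by adjoining a fixed minimal decomposition of a complement of $S$ in $T$. It asserts that this extension map is injective without further justification. Injectivity does hold: the atoms of the extension that lie in $S$ are independent, so there are at most $\dim S$ of them, and they must therefore be exactly $\mathfrak a$. That argument is structural and independent of counting.

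Yours reduces the claim to monotonicity of $f(n)$ and proves it through the exact ratio $f(n+1)/f(n)=q^n[n+1]_q/(n+1)$. Your bookkeeping is right: $\binom{n+1}{2}-\binom n2=n$, and $f(2)=q(q+1)/2$. This fits the ``corollary'' label better, since it genuinely uses the preceding theorem.

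Your route also gives more than the injection. It shows strict growth $f(n+1)>f(n)$ for $n\ge 1$, and it pins down the equality $f(0)=f(1)=1$. The equality matters for the rest of the paper. The paper later asserts that $\mathfrak N$ is strictly increasing in dimension (Proposition~\ref{prop:M1-implies-M2}), and it needs strictness in the identity-of-indiscernibles step of the metric theorem. Your computation shows strictness fails between dimensions $0$ and $1$: for distinct atoms $A,B$ one gets $d_{\mathfrak N}(A,B)=1+1-2\,\mathfrak N(\{0\})=0$.
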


\begin{proof}
    Every minimal atomic decomposition of $S$ can be extended to a minimal atomic decomposition of $T$. This defines an injective function from the set of minimal atomic decompositions of $S$ to the set of minimal atomic decompositions of $T$. Hence, $\mathfrak{N}(S)\le \mathfrak{N}(T)$. 
\end{proof}

\section{The Atomic Operator Channel}\label{sec:atomic_channel}
We consider communication between a transmitter and a receiver occurring over
sequential transmission rounds through a network with intermediate nodes.
At each round, the transmitter injects into the network a collection of packets,
all of the same length, which traverse the network and may be altered by the action
of intermediate nodes before reaching the receiver.
Rather than modeling the detailed packet-level operations performed by the network,
we focus on the algebraic object induced by a transmission round.

Specifically, packets are modeled as vectors in $\mathbb{F}_q^n$ for some finite
field $\mathbb{F}_q$, and the information conveyed in a transmission round is
identified with the subspace spanned by these packets.
Classical subspace-based models treat the transmitted object solely as a subspace
of an ambient space and characterize channel impairments through dimension loss
or insertion of additional dimensions.
In contrast, our goal is to capture distortions that act on the internal structure
of a subspace.

\subsection{Atomic Representation of Transmissions}

Let $V$ be a finite-dimensional vector space over $\mathbb{F}_q$, and let
$S \subseteq V$ denote the subspace transmitted in a given round.
A central feature of our framework is that $S$ is not represented by a basis,
but rather through a redundant atomic decomposition of $S$ ---
a collection 
\(
\mathfrak{a}_S = \{A_1, \dots, A_k\},
\)
where each $A_i$ is a one-dimensional subspace of $S$ and
\(
S = A_1 + \dots + A_k.
\)
The cardinality $k>\dim S$ since $\mathfrak{a}_S$ is required to  be redundant.

We model a transmission through the network as an operation acting directly on
the atomic representation of $S$.
Specifically, if $\mathfrak{a}_S$ is the atomic decomposition input into the
network, the receiver observes an atomic decomposition
\[
\mathfrak{a}_U = \mathfrak{a}_S' \cup \{B_1, \dots, B_t\},
\]
where $\mathfrak{a}_S' \subseteq \mathfrak{a}_S$ represents atoms that survive
the transmission (atomic erasures), and $\{B_1, \dots, B_t\}$ are one-dimensional
subspaces not contained in $S$, representing atomic insertions.
The received subspace is then given by
\[
U = \bigvee_{A \in \mathfrak{a}_U} A.
\]

This model shifts the focus from subspaces viewed as indivisible objects to their
internal atomic composition.
In particular, atomic erasures need not reduce the dimension of the transmitted
subspace, and atomic insertions may introduce perturbations that are
not captured by purely dimension-based models.

\subsection{Redundancy and Structural Robustness}

An essential component of our framework is the deliberate use of redundancy.
Rather than transmitting a basis of $S$, the transmitter employs atomic
decompositions that include additional atoms beyond a minimal generating set.
The purpose of this redundancy is to increase robustness against erasures and
other perturbations that may occur as the message propagates through the
network.

At each node, the atomic representation of the message may be altered through
the removal of atoms or the insertion of new ones.
By transmitting a decomposition with controlled redundancy, the probability
that all information necessary to reconstruct $S$ is lost due to atom removal
is significantly reduced, since a generating core may survive even when a
portion of the atoms is erased.
In practice, the number of atoms is chosen as a fixed multiple of $\dim S$,
providing a tunable trade-off between transmission overhead and resilience to
structural degradation.

\subsection{The Atomic Operator Channel}

\begin{definition}[Atomic Operator Channel]
Let $V$ be a finite-dimensional vector space over $\mathbb{F}_q$, and let
$S \subseteq V$ be a transmitted subspace.
Let
\[
\mathfrak{a}_S = \{A_1, A_2, \dots, A_k\}
\]
be a redundant atomic decomposition of $S$, where each $A_i$ is a one-dimensional
subspace of $S$ and
\(
S= \bigvee_{i=1}^k A_i.
\)
The cardinality $k>\dim S$, allowing for redundant
atomic representations.

An \emph{atomic operator channel} produces an output subspace
$U \subseteq V$ of the form
\[
U
\;=\;
\bigvee_{A_i \in \mathfrak{a}_S'} A_i
\;\vee\;
\bigvee_{j=1}^{t} B_j,
\]
where:
\begin{enumerate}
    \item $\mathfrak{a}_S' \subseteq \mathfrak{a}_S$ denotes the collection of
    surviving atoms. The elements in $\mathfrak{a}_S\setminus\mathfrak{a}_S'$ correspond to the atomic erasures. The number of atomic erasures is $k - |\mathfrak{a}_S'|$.
    \item $\{B_1,\dots,B_t\}$ is a collection of one-dimensional subspaces of $V$
    such that $B_j \cap S = \{0\}$ for all $j$, modeling atomic insertions.
    \item The error space is $E = \bigvee_{j=1}^{t} B_j$.
\end{enumerate}

The receiver observes only the resulting subspace $U$.
This model captures channel distortions at the level of atomic structure,
where erasures correspond to the removal of one-dimensional components and
insertions correspond to the introduction of new one-dimensional directions
outside the transmitted subspace.
\end{definition}

\section{The $\mathfrak{N}$-Induced Distance}
\label{sec:metric}

We now establish the structural properties of the function $\mathfrak N$
that enable the construction of a distance on the lattice of subspaces.
Throughout this section, $V$ denotes a finite-dimensional vector space
over $\mathbb F_q$.

\subsection{Supermodularity of $\mathfrak N$}

\begin{theorem}\label{thm:supermodular}
For all subspaces $S,T \subseteq V$, the function $\mathfrak N$ satisfies
\begin{description}
     \item[(i)] $\mathfrak N(S) \cdot \mathfrak N(T)
\;\le\;
\mathfrak N(S \cap T) \cdot \mathfrak N(S + T)$, \quad (log-supermodularity)
\item[(ii)] $\mathfrak N(S) + \mathfrak N(T)
\;\le\;
\mathfrak N(S \cap T) + \mathfrak N(S + T).$ \quad (supermodularity)
\end{description}

\end{theorem}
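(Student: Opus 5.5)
The key observation is that, by the Theorem giving the closed formula for $\mathfrak N$, the value $\mathfrak N(S)$ depends only on $\dim S$. Write $f(n)=\frac{\prod_{i=0}^{n-1}(q^n-q^i)}{n!\,(q-1)^n}$, with $f(0)=1$, so that $\mathfrak N(S)=f(\dim S)$. Set $a=\dim(S\cap T)$, $b=\dim S$, $c=\dim T$ and $d=\dim(S+T)$. By the dimension formula, $a+d=b+c$ and $a\le b,c\le d$. Both inequalities then reduce to statements about the single sequence $f(n)$ at four integers in this configuration.

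For (i), I will show that $g(n)=\log f(n)$ is discretely convex, i.e.\ that the ratio $r(n)=f(n+1)/f(n)$ is nondecreasing. Using $|GL_n(\mathbb F_q)|=q^{n(n-1)/2}\prod_{j=1}^{n}(q^j-1)$, we get $|GL_{n+1}|/|GL_n|=q^n(q^{n+1}-1)$, and hence
\[
r(n)=\frac{q^n(q^{n+1}-1)}{(n+1)(q-1)}.
\]
The inequality $r(n+1)\ge r(n)$ is equivalent to
\[
q\,(q^{n+2}-1)(n+1)\ge (q^{n+1}-1)(n+2).
\]
This holds because $q^{n+2}-1\ge q^{n+1}-1$ and $q\ge 2\ge \frac{n+2}{n+1}$. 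With convexity in hand, assume without loss of generality $b\le c$. Then
\[
g(d)-g(c)=\sum_{k=c}^{d-1}\bigl(g(k+1)-g(k)\bigr)\quad\text{and}\quad g(b)-g(a)=\sum_{k=a}^{b-1}\bigl(g(k+1)-g(k)\bigr).
\]
Both sums have $d-c=b-a$ terms. The first is the second shifted upward by $c-a\ge 0$, so by monotonicity of the increments $g(d)-g(c)\ge g(b)-g(a)$. Exponentiating gives $f(b)f(c)\le f(a)f(d)$, which is (i).

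For (ii), I will combine (i) with Corollary~\ref{Isotonic}, which gives $f(a)\le f(b)$ and $f(a)\le f(c)$, together with $f(a)\ge 1>0$. From (i), $f(d)\ge f(b)f(c)/f(a)$. Therefore
\[
f(a)+f(d)-f(b)-f(c)\;\ge\; f(a)+\frac{f(b)f(c)}{f(a)}-f(b)-f(c)\;=\;\frac{\bigl(f(b)-f(a)\bigr)\bigl(f(c)-f(a)\bigr)}{f(a)}\;\ge\;0,
\]
which is (ii).

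The main obstacle is the log-convexity step. It requires simplifying the ratio of $GL$ orders correctly and checking the resulting elementary inequality for all $n\ge 0$ and all $q\ge 2$. The rest of the argument is the standard passage from convex increments to the inequality at $a,b,c,d$, plus the short algebraic identity used for (ii).
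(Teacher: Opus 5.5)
Your proof is correct. For part (i) it takes a different route from the paper; for part (ii) it takes essentially the same route.

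\textbf{Part (i).} The paper works directly with the four dimensions, with $W$ in the role of $T$. It factors $\prod_{i}(q^{n_{S+W}}-q^i)$ against the $W$-product and $\prod_i(q^{n_S}-q^i)$ against the $(S\cap W)$-product. It then discards the factor $\prod_{i<t}(q^{n_{S+W}}-q^i)/\prod_{i<t}(q^{n_S}-q^i)\ge 1$. What remains is $q^{t(n_W-n_S)+t^2}\ge \frac{(n_W+t)!\,(n_S-t)!}{n_W!\,n_S!}$, which it proves by induction on $t=n_S-n_{S\cap W}$.

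You instead isolate the one-variable fact underlying all of this: log-convexity of $f$, i.e.\ monotonicity of $r(n)=f(n+1)/f(n)$. This needs only a single elementary check, after which you conclude by the standard shifted-increments comparison.

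What your route buys is modularity and symmetry in $S$ and $T$. Your ``without loss of generality $b\le c$'' is never used, since the shift $c-a$ is nonnegative regardless. The paper's induction, by contrast, tacitly needs $n_W\ge n_S$. Its inductive step invokes $q^{a-b}\ge a/b$ with $a=n_W+k+1$ and $b=n_S-k$, which requires $a>b$. Without the ordering, the intermediate claim is false: for $q=2$, $n_W=1$, $n_S=3$, $t=1$, the left side is $1/2$ and the right side is $2/3$. So that argument has to be repaired by appealing to the symmetry of (i). The paper's computation does, in exchange, display an explicit lower bound on the ratio $\mathfrak N(S+W)\mathfrak N(S\cap W)/(\mathfrak N(S)\mathfrak N(W))$.

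\textbf{Part (ii).} You follow the paper's route: isotone plus log-supermodular implies supermodular. The difference is that you actually prove this ``well-known fact'' via the identity $f(a)+\frac{f(b)f(c)}{f(a)}-f(b)-f(c)=\frac{(f(b)-f(a))(f(c)-f(a))}{f(a)}$, whereas the paper only cites it.
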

\begin{proof}
    \item[(i)] We aim to prove that for all \( S, W \in L(V) \),
		\[
		\mathfrak{N}(S) \cdot \mathfrak{N}(W) \leq \mathfrak{N}(S + W) \cdot \mathfrak{N}(S \cap W),
		\]
		or equivalently,
		\[
		\frac{\mathfrak{N}(S + W)/\mathfrak{N}(W)}{\mathfrak{N}(S)/\mathfrak{N}(S \cap W)} \geq 1.
		\]
		
		Let us analyze both sides using the known formula:
		\[
		\mathfrak{N}(X) = \frac{\prod_{i=0}^{n_X - 1}(q^{n_X} - q^i)}{n_X! \cdot (q - 1)^{n_X}},
		\]
		where \( n_X = \dim(X) \). By factoring terms, we have:
		\begin{align*}
		\prod_{i=0}^{n_{S \vee W} - 1}(q^{n_{S \vee W}} - q^i)
		&= q^{n_W(n_{S \vee W} - n_W)} \cdot \prod_{i=0}^{n_W - 1}(q^{n_W} - q^i) \cdot \prod_{i=0}^{t - 1}(q^{n_{S \vee W}} - q^i), \\
		\prod_{i=0}^{n_S - 1}(q^{n_S} - q^i)
		&= q^{n_{S \wedge W}(n_S - n_{S \wedge W})} \cdot \prod_{i=0}^{n_{S \wedge W} - 1}(q^{n_{S \wedge W}} - q^i) \cdot \prod_{i=0}^{t - 1}(q^{n_S} - q^i),
		\end{align*}
		where \( t := n_{S \vee W} - n_W = n_S - n_{S \wedge W} \).
		
		Thus, the quotient becomes:
		\[
		\frac{\mathfrak{N}(S \vee W)/\mathfrak{N}(W)}{\mathfrak{N}(S)/\mathfrak{N}(S \wedge W)} = 
		\frac{
			q^{n_W t} \cdot n_W! \cdot \prod_{i=0}^{t - 1}(q^{n_{S \vee W}} - q^i)
		}{
			(q - 1)^t \cdot n_{S \vee W}!
		}
		\cdot
		\frac{
			(q - 1)^t \cdot n_S!
		}{
			q^{n_{S \wedge W} t} \cdot n_{S \wedge W}! \cdot \prod_{i=0}^{t - 1}(q^{n_S} - q^i)
		}.
		\]
		
		Now observe:
		\begin{itemize}
			\item The exponents satisfy \( n_W t - n_{S \wedge W} t = t(n_W - n_{S \wedge W}) \).
			\item Since \( n_{S \vee W} \geq n_S \), it follows that
			\[
			\prod_{i=0}^{t - 1}(q^{n_{S \vee W}} - q^i) \geq \prod_{i=0}^{t - 1}(q^{n_S} - q^i).
			\]
		\end{itemize}
		
		Therefore, the whole expression is bounded below by
		\[
		q^{t(n_W - n_{S \wedge W})} \cdot \frac{n_S! \cdot n_W!}{n_{S \vee W}! \cdot n_{S \wedge W}!}.
		\]
		
		Now we claim that for all \( t \leq n_S \),
		\[
		q^{t(n_W - n_S) + t^2} \geq \frac{(n_W + t)! \cdot (n_S - t)!}{n_W! \cdot n_S!}.
		\]
		We prove this by induction on \( t \).
		
		\emph{Base case:} For \( t = 0 \),
		\[
		q^0 = 1 = \frac{n_W! \cdot n_S!}{n_W! \cdot n_S!}.
		\]
		
		\emph{Inductive step:} Assume the inequality holds for \( t = k \):
		\[
		q^{k(n_W - n_S) + k^2} \geq \frac{(n_W + k)! \cdot (n_S - k)!}{n_W! \cdot n_S!}.
		\]
		
		We must prove it for \( t = k + 1 \), that is:
		\[
		q^{(k+1)(n_W - n_S) + (k+1)^2} \geq \frac{(n_W + k + 1)! \cdot (n_S - k - 1)!}{n_W! \cdot n_S!}.
		\]
		
		Dividing both sides of the \( t = k+1 \) inequality by the inductive hypothesis, it suffices to show:
		\[
		q^{(n_W + k + 1) - (n_S - k)} \geq \frac{n_W + k + 1}{n_S - k}.
		\]
		
		Since \( q \geq 2 \), the inequality
		\[
		q^{a-b} \geq \frac{a}{b}
		\]
		holds for all \( a,b \in \mathbb{N} \) such that $a>b$, and thus the desired inequality follows. 

        \item[(ii)] It is a well-known fact that if a function is isotone Corollary \ref{Isotonic} and log-supermodular, then it is also supermodular.
\end{proof}

Supermodularity formalizes the idea that the combinatorial complexity of
atomic decompositions increases when subspaces are combined and decreases
when structure is shared.

\subsection{Definition of the $\mathfrak N$-Induced Distance}

\begin{definition}
Let $S,T \subseteq V$ be subspaces of $V$.
The \emph{$\mathfrak N$-induced distance} between $S$ and $T$ is defined by
\[
d_{\mathfrak N}(S,T)
=
\mathfrak N(S) + \mathfrak N(T) - 2\,\mathfrak N(S \cap T).
\]
\end{definition}

This quantity measures the loss of atomic decomposition structure when
restricting from $S$ and $T$ to their intersection subspace.

\subsection{Metric Properties}

\begin{theorem}
The function $d_{\mathfrak N} : \mathcal L(V) \times \mathcal L(V) \to \mathbb R$
is a metric on the lattice of subspaces.
\end{theorem}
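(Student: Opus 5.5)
The plan is to treat $d_{\mathfrak N}$ as the standard ``lower'' distance induced on a lattice by an isotone, supermodular function. Everything except identity of indiscernibles should follow from Corollary~\ref{Isotonic} and Theorem~\ref{thm:supermodular}(ii).

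\emph{Nonnegativity and symmetry.} Symmetry is immediate because $S\cap T=T\cap S$. Since $S\cap T\subseteq S$ and $S\cap T\subseteq T$, Corollary~\ref{Isotonic} gives $\mathfrak N(S\cap T)\le \mathfrak N(S)$ and $\mathfrak N(S\cap T)\le\mathfrak N(T)$. Adding these yields $d_{\mathfrak N}(S,T)\ge 0$. Also $d_{\mathfrak N}(S,S)=2\mathfrak N(S)-2\mathfrak N(S)=0$.

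\emph{Triangle inequality.} For subspaces $S,T,R$, a direct expansion gives
\[
d_{\mathfrak N}(S,T)+d_{\mathfrak N}(T,R)-d_{\mathfrak N}(S,R)=2\bigl[\mathfrak N(T)+\mathfrak N(S\cap R)-\mathfrak N(S\cap T)-\mathfrak N(T\cap R)\bigr].
\]
It therefore suffices to show $\mathfrak N(S\cap T)+\mathfrak N(T\cap R)\le \mathfrak N(T)+\mathfrak N(S\cap R)$. Put $X=S\cap T$ and $Y=T\cap R$ and apply Theorem~\ref{thm:supermodular}(ii):
\[
\mathfrak N(X)+\mathfrak N(Y)\le \mathfrak N(X\cap Y)+\mathfrak N(X+Y).
\]
Here $X+Y\subseteq T$ and $X\cap Y=S\cap T\cap R\subseteq S\cap R$. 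Isotonicity then bounds the right-hand side by $\mathfrak N(S\cap R)+\mathfrak N(T)$, which closes the argument.

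\emph{Identity of indiscernibles.} This is the step I expect to be the main obstacle. Suppose $d_{\mathfrak N}(S,T)=0$. The two nonnegative gaps $\mathfrak N(S)-\mathfrak N(S\cap T)$ and $\mathfrak N(T)-\mathfrak N(S\cap T)$ must then both vanish. To conclude $S=S\cap T=T$, I need \emph{strict} isotonicity: $S\subsetneq S'$ should imply $\mathfrak N(S)<\mathfrak N(S')$. Since $\mathfrak N$ depends only on dimension, I would compute the ratio of consecutive values from the closed formula in the preceding theorem:
\[
\frac{\mathfrak N(n+1)}{\mathfrak N(n)}=\frac{q^{n}(q^{n+1}-1)}{(n+1)(q-1)}.
\]
I would then show this ratio exceeds $1$ for $n\ge 1$ by an estimate of the same type as $q^{a-b}\ge a/b$ used in the proof of Theorem~\ref{thm:supermodular}.

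The delicate point is $n=0$. There the formula gives $\mathfrak N(\{0\})=\mathfrak N(A)=1$ for every atom $A$, so for two distinct atoms $A_1\ne A_2$ we get $d_{\mathfrak N}(A_1,A_2)=1+1-2\cdot1=0$. Strict isotonicity fails exactly on the step from $\{0\}$ to an atom, and the argument above cannot close there. As the statement is worded, separation therefore genuinely fails for pairs of distinct atoms. The proof as planned delivers the full metric axioms only on subspaces of dimension $\ne 1$, and a pseudometric on all of $\mathcal L(V)$. The theorem would hold as stated only under a convention on $\mathfrak N(\{0\})$ or on atoms that makes $\mathfrak N$ strictly isotone, and I would need to flag this to the authors rather than claim the general case.
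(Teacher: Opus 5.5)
Your arguments for nonnegativity, symmetry and the triangle inequality are the paper's own. You apply supermodularity to $S\cap T$ and $T\cap U$, then use isotonicity to bound $\mathfrak N(S\cap T\cap U)\le \mathfrak N(S\cap U)$ and $\mathfrak N\bigl((S\cap T)+(T\cap U)\bigr)\le\mathfrak N(T)$.

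On identity of indiscernibles your objection is correct. It exposes an error in the paper, not a gap in your proposal. The paper derives $\mathfrak N(S)-\mathfrak N(S\cap T)=\mathfrak N(S\cap T)-\mathfrak N(T)$ and then asserts that ``by isotonicity'' this forces $S=S\cap T=T$. Non-strict isotonicity only yields $\mathfrak N(S)=\mathfrak N(S\cap T)=\mathfrak N(T)$. Strict isotonicity fails between dimensions $0$ and $1$: $\mathfrak N(\{0\})=1$ (the empty decomposition, or the closed formula at $n=0$), and $\mathfrak N(A)=1$ for every atom $A$. Hence $d_{\mathfrak N}(A_1,A_2)=0$ for distinct atoms, and also $d_{\mathfrak N}(\{0\},A)=0$, so the statement as written is false.

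Your ratio computation settles the remaining cases. For $n\ge 1$,
$\frac{q^n(q^{n+1}-1)}{(n+1)(q-1)}=\frac{q^n(1+q+\cdots+q^n)}{n+1}\ge q^n\ge 2$.
So $d_{\mathfrak N}(S,T)=0$ holds exactly when $S=T$ or $\dim S,\dim T\le 1$. Thus $d_{\mathfrak N}$ is a pseudometric on $\mathcal L(V)$ that collapses all subspaces of dimension at most $1$ to one point. As you say, it is a genuine metric on the subspaces of dimension $\ne 1$.

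A remark on your suggested repair. Strict isotonicity is indeed necessary: if $S\subsetneq T$ and $\mathfrak N(S)=\mathfrak N(T)$, then $d_{\mathfrak N}(S,T)=0$. But redefining $\mathfrak N(\{0\}):=0$ changes an input to Theorem~\ref{thm:supermodular}, so the triangle inequality must be rechecked.

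It does survive. Write $f(n)$ for the value in dimension $n$ and $r(n)=f(n+1)/f(n)$. The modified profile $0,1,f(2),f(3),\dots$ has increments $1$, and then $f(n)\bigl(r(n)-1\bigr)$ for $n\ge 1$. The first of these later increments is $\tfrac{q(q+1)}{2}-1\ge 2$, and they are nondecreasing because $r(n+1)/r(n)>q^2\tfrac{n+1}{n+2}\ge 2$. So the profile is convex in the dimension. Convexity gives $f(a)+f(b)\le f(c)+f(a+b-c)$ whenever $c\le\min\{a,b\}$, which is exactly supermodularity on $\mathcal L(V)$. With that convention your triangle-inequality argument goes through verbatim, separation holds, and $d_{\mathfrak N}$ becomes a metric.
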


\begin{proof}
\emph{Non-negativity.}
Since $S \cap T \subseteq S$ and $S \cap T \subseteq T$, and $\mathfrak N$ is
isotone, we have
\[
\mathfrak N(S \cap T) \le \min\{\mathfrak N(S),\mathfrak N(T)\},
\]
which implies $d_{\mathfrak N}(S,T) \ge 0$.

\emph{Symmetry.}
The definition is symmetric in $S$ and $T$.

\emph{Identity of indiscernibles.}
If $S=T$, then $S \cap T = S$ and hence $d_{\mathfrak N}(S,T)=0$.
Conversely, if $d_{\mathfrak N}(S,T)=0$, then $\mathfrak N(S)-\mathfrak N(S \cap T) = \mathfrak N(S \cap T)-\mathfrak N(T)$.
By isotonicity, this can occur only if $S=S\cap T=T$, and hence $S=T$.

\emph{Triangle inequality.}
Let $S,T,$ and $U$ be subspaces of $V$.
Recall that
\begin{align*}
d_{\mathfrak{N}}(S,U) &= \mathfrak{N}(S) + \mathfrak{N}(U) - 2\mathfrak{N}(S\cap U),\\
d_{\mathfrak{N}}(S,T) &= \mathfrak{N}(S) + \mathfrak{N}(T) - 2\mathfrak{N}(S\cap T),\\
d_{\mathfrak{N}}(T,U) &= \mathfrak{N}(T) + \mathfrak{N}(U) - 2\mathfrak{N}(T\cap U).
\end{align*}

Thus, to prove that
\(
d_{\mathfrak{N}}(S,U) \le d_{\mathfrak{N}}(S,T) + d_{\mathfrak{N}}(T,U),
\)
it suffices to verify that
\begin{equation}\label{eq:triangle-key}
\mathfrak{N}(S\cap U) + \mathfrak{N}(T) \;\ge\; \mathfrak{N}(S\cap T) + \mathfrak{N}(T\cap U).
\end{equation}

By the supermodularity of $\mathfrak{N}$, we have
\begin{equation}\label{eq:supermod}
\mathfrak{N}(S\cap T) + \mathfrak{N}(T\cap U)
\;\le\;
\mathfrak{N}\bigl((S\cap T)\cap (T\cap U)\bigr)
+
\mathfrak{N}\bigl((S\cap T) + (T\cap U)\bigr).
\end{equation}

Notice that
\(
(S\cap T)\cap (T\cap U) = S\cap T\cap U \subseteq S\cap U,
\)
and hence, by isotonicity of $\mathfrak{N}$,
\begin{equation}\label{eq:ineq1}
\mathfrak{N}\bigl((S\cap T)\cap (T\cap U)\bigr) \le \mathfrak{N}(S\cap U).
\end{equation}

Moreover,
\(
(S\cap T) + (T\cap U) \subseteq T,
\)
and therefore,
\begin{equation}\label{eq:ineq2}
\mathfrak{N}\bigl((S\cap T) + (T\cap U)\bigr) \le \mathfrak{N}(T).
\end{equation}

Combining \eqref{eq:supermod}, \eqref{eq:ineq1}, and \eqref{eq:ineq2}, we obtain
\[
\mathfrak{N}(S\cap T) + \mathfrak{N}(T\cap U)
\;\le\;
\mathfrak{N}(S\cap U) + \mathfrak{N}(T),
\]
which proves \eqref{eq:triangle-key}. Consequently, $d_{\mathfrak{N}}$ satisfies the triangle inequality.

\end{proof}

\subsection{Computational complexity of evaluating $d_{\mathfrak N}$}

We measure complexity in \emph{field operations} over $\mathbb{F}_q$.
Let $S,T \subseteq \mathbb{F}_q^{\,n}$ be given by generator matrices
$A_S \in \mathbb{F}_q^{m_S\times n}$ and $A_T \in \mathbb{F}_q^{m_T\times n}$.

We use the standard fact that for $A\in\mathbb{F}_q^{m\times n}$,
$\rk(A)$ can be computed by Gaussian elimination using
$O\!\big(mn\min\{m,n\}\big)$ field operations. Moreover,
\[
\dim(S\cap T)=\dim(S)+\dim(T)-\dim(S+T),
\qquad
\dim(S+T)=\rk\!\begin{pmatrix}A_S\\A_T\end{pmatrix}.
\]

\begin{proposition}
Given generator matrices $A_S,A_T$ for the subspaces $S,T\subseteq \mathbb{F}_q^{\,n}$,
the distance $d_{\mathfrak N}(S,T)$ can be evaluated using $O(n^3)$
field operations over $\mathbb{F}_q$.
\end{proposition}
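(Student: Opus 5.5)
The plan is to reduce the evaluation of $d_{\mathfrak N}(S,T)$ to three dimension computations and then apply the closed formula for $\mathfrak N$ from the theorem in Section~\ref{sec:minimal_atomic_decompositions}. That formula shows $\mathfrak N(X)$ depends only on $n_X=\dim X$ (and $q$). Hence
\[
d_{\mathfrak N}(S,T)=f(\dim S)+f(\dim T)-2f(\dim(S\cap T)),\qquad f(k)=\frac{\prod_{i=0}^{k-1}(q^k-q^i)}{k!\,(q-1)^k}.
\]
Once the three integers $\dim S$, $\dim T$, $\dim(S\cap T)$ are known, the remaining work is arithmetic on numbers depending only on these dimensions (all at most $n$). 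That arithmetic is independent of the matrix entries and does not consume field operations over $\mathbb F_q$.

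\textbf{Step 1 (dimensions).} Compute $\dim S=\rk(A_S)$ and $\dim T=\rk(A_T)$ by Gaussian elimination, costing $O(m_Sn\min\{m_S,n\})$ and $O(m_Tn\min\{m_T,n\})$ field operations. Then compute $\dim(S+T)$ as the rank of the stacked matrix, and obtain $\dim(S\cap T)=\dim S+\dim T-\dim(S+T)$ via the identity recalled before the proposition.

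\textbf{Step 2 (controlling the sizes).} The bound $O(n^3)$ must not depend on $m_S,m_T$, so I will assume, or enforce by preprocessing, that the generator matrices have at most $n$ rows. If a generator matrix has more than $n$ rows, I would first row-reduce it to an echelon form with at most $n$ nonzero rows. With $m_S,m_T\le n$, each elimination costs $O(n\cdot n\cdot n)=O(n^3)$. The stacked matrix has at most $2n$ rows, so its rank costs $O(2n\cdot n\cdot n)=O(n^3)$. The total over the three rank computations is $O(n^3)$.

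\textbf{Step 3 (evaluation of $\mathfrak N$).} Evaluate $f$ at the three dimensions, each at most $n$. This is a product of at most $n$ terms, carried out as integer arithmetic, which involves no field operations. Alternatively, these values can be tabulated once as constants depending only on $q$ and $n$. Combining the three values gives $d_{\mathfrak N}(S,T)$.

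\textbf{Main obstacle.} The delicate point is the accounting convention. Strictly, the row counts $m_S,m_T$ could exceed $n$, in which case plain elimination costs $O(mn^2)$. I would handle this by reading the statement as concerning generator matrices with $m_S,m_T=O(n)$, which is standard since independent rows number at most $n$. I would also state explicitly that evaluating the closed formula for $\mathfrak N$ is counted outside the field-operation model. Everything else follows directly from Gaussian elimination and the dimension formula.
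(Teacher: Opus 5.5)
Your proposal is correct and follows the paper's proof essentially verbatim: three rank computations by Gaussian elimination under the assumption $m_S,m_T\le n$, the identity $\dim(S\cap T)=\dim S+\dim T-\dim(S+T)$, and evaluation of the closed form for $\mathfrak N$ at three integers (the paper counts this as $O(n)$ extra arithmetic steps, or $O(1)$ after tabulation, rather than excluding it from the count). One small caveat: the ``preprocessing'' option in Step~2 does not actually help, since row-reducing an $m\times n$ matrix with $m>n$ already costs $O(mn^2)$, so the $O(n^3)$ bound genuinely rests on the assumption $m_S,m_T=O(n)$ that you, like the paper, end up adopting.
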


\begin{proof}
Compute
\[
d_S=\rk(A_S),\qquad d_T=\rk(A_T),\qquad
d_+=\rk\!\begin{pmatrix}A_S\\A_T\end{pmatrix}.
\]
In the worst case $m_S\le n$, $m_T\le n$, and $m_S+m_T\le 2n$, so each rank
computation costs $O(n^3)$ field operations and the total remains $O(n^3)$.
Then $\dim(S\cap T)=d_S+d_T-d_+$, and
\[
d_{\mathfrak N}(S,T)=\mathfrak N(d_S)+\mathfrak N(d_T)-2\,\mathfrak N(d_S+d_T-d_+).
\]
The remaining arithmetic consists of evaluating $\mathfrak{N}(d_S)$, $\mathfrak{N}(d_T)$, and $\mathfrak{N}(d_I)$. By the closed-form characterization of $\mathfrak{N}$, each evaluation $\mathfrak{N}(d)$ depends only on the integer $d \in \{0,1,\dots,n\}$ and can be computed in $O(d)\le O(n)$ arithmetic steps (e.g., as a product of $d$ factors), or in $O(1)$ time per query after precomputing the table $\{\mathfrak{N}(0),\dots,\mathfrak{N}(n)\}$. Hence, this arithmetic contributes at most $O(n)$ additional work and does not affect the dominant $O(n^3)$ term coming from the rank computations.
\end{proof}

The values $\mathfrak N(d)$ may be very large. If one measures bit-complexity,
then integer arithmetic/comparisons depend on the output size; however, in the
field-operation model the dominant cost remains the rank computations.

\section{Codes Under the $\mathfrak{N}$-Induced Distance} \label{sec:codeundermetric}

Let $V$ be an $N$-dimensional ambient vector space over $\mathbb{F}_q$.
A \emph{code} for the Atomic Operator Channel is a nonempty collection
\[
\mathcal C \subseteq \mathcal L(V)
\]
of subspaces of $V$.
Equivalently, a code may be viewed as a collection of subspaces together with
their associated redundant atomic representations used at
transmission time.

The \emph{size} of the code is $|\mathcal C|$.
Distances between codewords are measured using the $\mathfrak N$-induced
distance $d_{\mathfrak N}$ defined in the previous section.

\subsection{Minimum Distance}

The \emph{minimum distance} of a code $\mathcal C$ is defined as
\[
D(\mathcal C)
\;=\;
\min_{\substack{X,Y\in\mathcal C \\ X\neq Y}}
d_{\mathfrak N}(X,Y).
\]
As in classical coding theory, the minimum distance governs the error-correction
capability of the code under minimum-distance decoding.
In particular, larger values of $D(\mathcal C)$ correspond to greater
separation between codewords in terms of atomic decomposition structure.

\subsection{Atomic Error Measures}

Let $S \in \mathcal C$ denote the transmitted subspace and let $U \subseteq V$
denote the received subspace produced by the Atomic Operator Channel.
Let
\[
S' = S \cap U
\]
denote the surviving portion of $S$ contained in $U$.

\begin{definition}
The atomic distortion between $S$ and $U$ is quantified as follows:
\begin{enumerate}
    \item The \emph{atomic erasure cost} is
    \[
    \epsilon(S,S') := d_{\mathfrak N}(S,S'),
    \]
    measuring the loss of atomic decomposition structure from $S$ to its
    surviving component $S'$.
    
    \item The \emph{atomic insertion cost} is
    \[
    \iota(U,S') := d_{\mathfrak N}(U,S'),
    \]
    measuring the contribution of atomic insertions outside $S$.
    
    \item The \emph{total atomic error} is
    \[
    \delta(U,S) := d_{\mathfrak N}(U,S),
    \]
    capturing the overall distortion between the transmitted and received
    subspaces.
\end{enumerate}
\end{definition} 

These quantities provide a refined decomposition of channel distortion into
erasures and insertions at the atomic level.
Unlike classical dimension-based error measures, atomic erasure and insertion
costs need not correspond directly to changes in subspace dimension and may
reflect structural degradation that remains invisible under traditional models.

\section{A Minimum Distance Decoder}\label{sec:minimum-distance-decoder}

The $\mathfrak N$-induced distance naturally gives rise to a minimum-distance
decoding strategy.
Let $\mathcal C \subseteq \mathcal L(V)$ be a code for the Atomic Operator Channel.
A \emph{minimum-distance decoder} with respect to $d_{\mathfrak N}$ is defined as
a decoder that, upon receiving a subspace $U \subseteq V$, outputs a codeword
$\widehat S \in \mathcal C$ satisfying
\[
d_{\mathfrak N}(U,\widehat S)
\;=\;
\min_{T\in\mathcal C} d_{\mathfrak N}(U,T).
\]

We now show that, as in classical coding theory, such a decoder successfully
recovers the transmitted codeword whenever the total atomic distortion is less
than half the minimum distance of the code.

\begin{theorem}[Minimum-Distance Decoding]\label{thm:error-erasure-correction}
Let $V$ be an $N$-dimensional vector space over the finite field $\mathbb F_q$, and
let $\mathcal C \subseteq \mathcal L(V)$ be a code with minimum distance
$D(\mathcal C)$ under the $\mathfrak N$-induced distance.
Suppose that $S\in\mathcal C$ is transmitted over an atomic operator channel, and
that the received subspace is
\[
U
=
\bigvee_{A_i \in \mathfrak a_S'} A_i
\;\vee\;
\bigvee_{j=1}^{t} B_j,
\qquad U \subseteq V,
\]
where $\mathfrak a_S'$ denotes the surviving atoms of $S$ and the inserted atoms
$\{B_j\}$ satisfy $B_j \cap S = \{0\}$.
Let
\[
S' = S \cap U
\]
denote the surviving portion of the transmitted subspace.

Then:
\begin{enumerate}
\item The total atomic distortion satisfies
\begin{align*}
d_{\mathfrak N}(U,S)
&=
d_{\mathfrak N}(S,S') + d_{\mathfrak N}(U,S')\\
&=\epsilon(S,S') +  \iota(U,S')
\end{align*}

\item If
\[
2\,d_{\mathfrak N}(U,S) < D(\mathcal C),
\]
then $S$ is the unique codeword minimizing the distance to $U$, and the
minimum-distance decoder recovers $S$.
\end{enumerate}
\end{theorem}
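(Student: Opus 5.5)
Part 1 is a direct computation from the definition of $d_{\mathfrak N}$ together with the identity $S\cap S'=S'$ and $U\cap S'=S'$, which hold because $S'=S\cap U$ is contained in both $S$ and $U$. Expanding,
\begin{align*}
d_{\mathfrak N}(S,S')+d_{\mathfrak N}(U,S')
&=\bigl(\mathfrak N(S)+\mathfrak N(S')-2\mathfrak N(S')\bigr)+\bigl(\mathfrak N(U)+\mathfrak N(S')-2\mathfrak N(S')\bigr)\\
&=\mathfrak N(S)+\mathfrak N(U)-2\mathfrak N(S\cap U)=d_{\mathfrak N}(U,S).
\end{align*}
So the additive splitting into $\epsilon(S,S')+\iota(U,S')$ is automatic. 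No property of the atomic channel beyond $S'=S\cap U$ is needed. In particular, the constraint $B_j\cap S=\{0\}$ plays no role here; it only guarantees that the channel model is well posed.

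For part 2, I would run the standard ball-packing argument, using the triangle inequality for $d_{\mathfrak N}$ established in the metric theorem of Section~\ref{sec:metric}. Fix any codeword $T\in\mathcal C$ with $T\neq S$. Then
\[
D(\mathcal C)\le d_{\mathfrak N}(S,T)\le d_{\mathfrak N}(S,U)+d_{\mathfrak N}(U,T).
\]
It follows that
\[
d_{\mathfrak N}(U,T)\ge D(\mathcal C)-d_{\mathfrak N}(U,S)>2d_{\mathfrak N}(U,S)-d_{\mathfrak N}(U,S)=d_{\mathfrak N}(U,S).
\]
Here the strict inequality uses the hypothesis $2d_{\mathfrak N}(U,S)<D(\mathcal C)$. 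Hence every other codeword is strictly farther from $U$ than $S$ is. Therefore $S$ is the unique minimizer of $T\mapsto d_{\mathfrak N}(U,T)$ over $\mathcal C$, and any minimum-distance decoder must output $\widehat S=S$.

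I do not expect a genuine obstacle. The only point needing care is that part 2 rests entirely on the triangle inequality, i.e.\ on the supermodularity of $\mathfrak N$ from Theorem~\ref{thm:supermodular}(ii), which I take as given from the earlier section. I would also remark that $D(\mathcal C)$ is well defined because $\mathcal C$ is finite, being a subset of the finite lattice $\mathcal L(V)$. If $|\mathcal C|=1$, I would adopt the convention $D(\mathcal C)=\infty$, so the claim is trivial.
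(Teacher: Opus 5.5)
Your proposal is correct and matches the paper's proof essentially line for line: part 1 is the same expansion using $S\cap S'=U\cap S'=S'$, and part 2 is the same triangle-inequality argument giving $d_{\mathfrak N}(U,T)\ge D(\mathcal C)-d_{\mathfrak N}(U,S)>d_{\mathfrak N}(U,S)$ for every codeword $T\neq S$. Your side remark is also accurate: the paper cites the condition $B_j\cap S=\{0\}$ in part 1, but the identity only uses $S'=S\cap U$.
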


\begin{proof}
Since $S' \subseteq S$ and $S' \subseteq U$, and all inserted atoms are disjoint
from $S$, we have
\[
S \cap S' = S' \quad\text{and}\quad U \cap S' = S'.
\]
Therefore,
\begin{align*}
d_{\mathfrak N}(S,S') + d_{\mathfrak N}(S',U)
&= \mathfrak N(S) + \mathfrak N(S') - 2\mathfrak N(S') \\
&\quad + \mathfrak N(S') + \mathfrak N(U) - 2\mathfrak N(S') \\
&= \mathfrak N(S) + \mathfrak N(U) - 2\mathfrak N(S') \\
&= d_{\mathfrak N}(S,U),
\end{align*}
which proves the first claim.

For the second claim, let $T\in\mathcal C$ with $T\neq S$.
By the triangle inequality,
\[
d_{\mathfrak N}(S,T)
\le
d_{\mathfrak N}(S,U) + d_{\mathfrak N}(U,T).
\]
Since $d_{\mathfrak N}(S,T) \ge D(\mathcal C)$, it follows that
\[
d_{\mathfrak N}(U,T)
\ge
D(\mathcal C) - d_{\mathfrak N}(U,S).
\]
If $2\,d_{\mathfrak N}(U,S) < D(\mathcal C)$, then
\[
d_{\mathfrak N}(U,T) > d_{\mathfrak N}(U,S)
\]
for all $T\neq S$, and hence $S$ is the unique minimizer of the distance to $U$.
\end{proof}

\section{A Singleton-Type Bound Under the $\mathfrak N$-Distance}\label{sec:singleton-bound}

Recall that for any $n$-dimensional subspace $S$ over $\mathbb F_q$, the number of
minimal atomic decompositions depends only on $n$.
We therefore write
\[
f(n) := \mathfrak N(S)\qquad ( \dim S = n).
\]
In particular, for $X,Y\in\mathcal L(V)$,
\[
d_{\mathfrak N}(X,Y)
=
f(\dim X)+f(\dim Y)-2f(\dim(X\cap Y)).
\]
When $\mathcal C\subseteq \mathcal G_q(N,k)$ is a constant-dimension code, this simplifies to
\[
d_{\mathfrak N}(X,Y)=2\bigl(f(k)-f(\dim(X\cap Y))\bigr).
\]

\begin{theorem}[Singleton-type bound induced by $d_{\mathfrak N}$]\label{thm:singleton}
Let $V$ be an $N$-dimensional vector space over $\mathbb F_q$, and let
$\mathcal C\subseteq \mathcal G_q(N,k)$ be a constant-dimension code with minimum
$\mathfrak N$-distance
\[
D(\mathcal C)=\min_{\substack{X,Y\in\mathcal C\\X\neq Y}} d_{\mathfrak N}(X,Y).
\]
Define
\[
s^\star
:=
\max\Bigl\{s\in\{0,1,\dots,k\}:\; 2\bigl(f(k)-f(s)\bigr)\ge D(\mathcal C)\Bigr\},
\qquad
\delta_{\mathrm{eff}}:=k-s^\star.
\]
Then every pair of distinct codewords satisfies $\dim(X\cap Y)\le s^\star$, and
\[
|\mathcal C|
\;\le\;
\left[\!\!\begin{array}{c}
N-\delta_{\mathrm{eff}}+1\\[2pt]
k
\end{array}\!\!\right]_q.
\]
Equivalently,
\[
|\mathcal C|
\;\le\;
\left[\!\!\begin{array}{c}
N-\delta_{\mathrm{eff}}+1\\[2pt]
N-k
\end{array}\!\!\right]_q.
\]
\end{theorem}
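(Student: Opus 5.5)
The argument has two parts. First, I reduce the $\mathfrak N$-distance constraint to a bound on the dimension of pairwise intersections. Second, I feed that bound into the standard puncturing (Singleton) argument for constant-dimension codes.

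For the first part, note that $f$ is nondecreasing on $\{0,1,\dots,k\}$. This follows from Corollary~\ref{Isotonic}, since a nested chain of subspaces of dimensions $0,1,\dots,k$ exists. It also follows directly from the closed formula, where $f(s+1)/f(s)=(q^{s+1}-1)/(s+1)\ge 1$; in fact the inequality is strict for $s\ge 1$ when $q\ge 2$. Now take distinct $X,Y\in\mathcal C$ and set $s=\dim(X\cap Y)\le k$. The definition of $D(\mathcal C)$ gives $2(f(k)-f(s))=d_{\mathfrak N}(X,Y)\ge D(\mathcal C)$. So $s$ belongs to the set over which $s^\star$ is the maximum. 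The set is nonempty: $s=0$ lies in it as soon as $|\mathcal C|\ge 2$, and if $|\mathcal C|\le 1$ the bound is trivial. Hence $s\le s^\star$, which gives the first claim. Equivalently, the classical subspace distance satisfies $d(X,Y)=2(k-s)\ge 2\delta_{\mathrm{eff}}$.

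For the second part, I run the usual Köetter--Kschischang puncturing argument. Fix a hyperplane-type subspace $W\subseteq V$ of dimension $N-\delta_{\mathrm{eff}}+1$; I will check the exact dimension bookkeeping so that $\dim W\ge k$. For each $X\in\mathcal C$, $\dim(X\cap W)\ge k-\delta_{\mathrm{eff}}+1=s^\star+1$. Choose a subspace $X_W\subseteq X\cap W$ of dimension exactly $s^\star+1$. The map $X\mapsto X_W$ is injective: if $X_W=Y_W$ for $X\neq Y$, then $\dim(X\cap Y)\ge s^\star+1$, contradicting the first part. This yields $|\mathcal C|\le \left[\!\begin{smallmatrix}N-\delta_{\mathrm{eff}}+1\\ s^\star+1\end{smallmatrix}\!\right]_q$. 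To reach the stated form with bottom entry $k$, I would instead use the chain-of-puncturings formulation. Iterate $\delta_{\mathrm{eff}}-1$ times the step "intersect with a hyperplane and, if necessary, shrink to dimension $k-1$". Each step preserves injectivity as long as the intersection bound holds. Alternatively, I can directly compare the two Gaussian binomials and show the one with bottom entry $s^\star+1$ is at most the one with bottom $k$. I expect this matching of parameters to the precise form $\left[\!\begin{smallmatrix}N-\delta_{\mathrm{eff}}+1\\ k\end{smallmatrix}\!\right]_q$ to be the main obstacle. My first approach is to invoke the classical Singleton bound for constant-dimension codes with minimum subspace distance $2\delta_{\mathrm{eff}}$ from \cite{koetter2008coding}, namely $|\mathcal C|\le\left[\!\begin{smallmatrix}N-\delta_{\mathrm{eff}}+1\\ \max\{k,N-k\}\end{smallmatrix}\!\right]_q$. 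I would then use that this is at least the stated bound, since $\max\{k,N-k\}$ dominates in the appropriate direction, or simply cite it in the form used there.

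Finally, the equivalent form follows from the symmetry $\left[\!\begin{smallmatrix}a\\ b\end{smallmatrix}\!\right]_q=\left[\!\begin{smallmatrix}a\\ a-b\end{smallmatrix}\!\right]_q$. I would verify that the stated second expression is consistent with this identity, handling it via duality: $X\mapsto X^\perp$ maps $\mathcal C$ to a code in $\mathcal G_q(N,N-k)$ with the same intersection deficit $\delta_{\mathrm{eff}}$. Applying the bound to the dual code produces the version with $N-k$.
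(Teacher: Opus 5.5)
You follow the same puncturing route as the paper, and your first part is fine. Membership of $\dim(X\cap Y)$ in the set defining $s^\star$ is immediate, so monotonicity of $f$ is not needed. (Your ratio is misstated anyway: it is $f(s+1)/f(s)=q^s(q^{s+1}-1)/\bigl((s+1)(q-1)\bigr)$, which equals $1$ at $s=0$.) Your injection $X\mapsto X_W$ into the $(s^\star+1)$-dimensional subspaces of $W$ is correct. It is in fact a repaired version of the paper's step. The paper maps $X\mapsto X\cap H$ and then counts $k$-dimensional subspaces of $H$, although $X\cap H$ is only guaranteed to have dimension at least $s^\star+1$. What you did not notice is the following. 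Set $m=N-\delta_{\mathrm{eff}}+1$. Then $m-(s^\star+1)=N-k$, so your bound $\bigl[\begin{smallmatrix}m\\ s^\star+1\end{smallmatrix}\bigr]_q$ already \emph{is} the second displayed bound $\bigl[\begin{smallmatrix}m\\ N-k\end{smallmatrix}\bigr]_q$.

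The genuine gap is the form with bottom entry $k$. None of your suggested routes to it works as written:
\begin{itemize}
\item The direct comparison is false in general. Take $N=6$, $k=4$ and two codewords meeting in a plane. Then $s^\star=\delta_{\mathrm{eff}}=2$, $m=5$, and $\bigl[\begin{smallmatrix}5\\ 3\end{smallmatrix}\bigr]_q=\bigl[\begin{smallmatrix}5\\ 2\end{smallmatrix}\bigr]_q>\bigl[\begin{smallmatrix}5\\ 4\end{smallmatrix}\bigr]_q$.
\item A chain of single puncturings only reproduces $\bigl[\begin{smallmatrix}m\\ s^\star+1\end{smallmatrix}\bigr]_q$.
\item The two displayed bounds are not related by symmetry. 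The identity sends $\bigl[\begin{smallmatrix}m\\ k\end{smallmatrix}\bigr]_q$ to $\bigl[\begin{smallmatrix}m\\ m-k\end{smallmatrix}\bigr]_q$, and $m-k=N-k$ only if $\delta_{\mathrm{eff}}=1$. The paper's ``equivalently'' is mistaken on this point.
\end{itemize}

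The missing step is your own duality paragraph, aimed at the other form. Apply it to the injection argument rather than to the theorem: the hypothesis via $D(\mathcal C)$ and $f(k)$ does not transfer to $\mathcal C^\perp$, but the dimension bound does. Indeed $\dim(X^\perp\cap Y^\perp)=N-2k+\dim(X\cap Y)\le (N-k)-\delta_{\mathrm{eff}}$. So the same injection, run on $\mathcal C^\perp\subseteq\mathcal G_q(N,N-k)$, gives $|\mathcal C|\le\bigl[\begin{smallmatrix}m\\ N-k-\delta_{\mathrm{eff}}+1\end{smallmatrix}\bigr]_q=\bigl[\begin{smallmatrix}m\\ k\end{smallmatrix}\bigr]_q$.

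If you prefer to cite K\"otter--Kschischang, note that $\bigl[\begin{smallmatrix}m\\ \max\{k,N-k\}\end{smallmatrix}\bigr]_q$ is the \emph{smaller} of the two displayed quantities. What you need is that it is at most the stated bound, not at least.

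Finally, $\delta_{\mathrm{eff}}=0$ does occur, for $k=1$, because $f(0)=f(1)=1$ forces $D(\mathcal C)=0$. In that case no $W$ of dimension $N+1$ exists, but both bounds are trivial and should be handled separately.
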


\begin{proof}
Let $X\neq Y$ be codewords. Since $d_{\mathfrak N}(X,Y)\ge D(\mathcal C)$ and
$d_{\mathfrak N}(X,Y)=2(f(k)-f(\dim(X\cap Y)))$, we obtain
\[
2\bigl(f(k)-f(\dim(X\cap Y))\bigr)\ge D(\mathcal C),
\]
hence $\dim(X\cap Y)\le s^\star$ by definition of $s^\star$. Set
$\delta_{\mathrm{eff}}=k-s^\star$.

Now puncture each codeword by intersecting with a fixed $(N-\delta_{\mathrm{eff}}+1)$-dimensional
subspace $H\subseteq V$. Standard arguments for constant-dimension subspace codes
show that the map $X\mapsto X\cap H$ is injective under the constraint
$\dim(X\cap Y)\le k-\delta_{\mathrm{eff}}$.
Therefore, $|\mathcal C|$ is at most the number of $k$-subspaces of $H$, namely
$\bigl[\!\!\begin{smallmatrix}N-\delta_{\mathrm{eff}}+1\\ k\end{smallmatrix}\!\!\bigr]_q$.
The dual form follows from $\bigl[\!\!\begin{smallmatrix}n\\k\end{smallmatrix}\!\!\bigr]_q
=\bigl[\!\!\begin{smallmatrix}n\\n-k\end{smallmatrix}\!\!\bigr]_q$.
\end{proof}

\section{Comparison with Dimension-Based Decoding}\label{sec:comparison}

We compare minimum-distance decoding under the classical subspace distance with
minimum-distance decoding under the $\mathfrak N$-induced distance. We show that,
for constant-dimension codes, successful decoding under the classical metric
implies successful decoding under the atomic metric.
\begin{definition}
Let $\mathcal C \subseteq \mathcal G_q(N,k)$ be a constant-dimension code, and
let $S,T \subseteq \mathbb F_q^n$ be subspaces.
\begin{enumerate}
\item $M_1$ denotes a minimum-distance decoder for $\mathcal C$ with respect to
the classical subspace distance
\[
d(S,T)=\dim(S)+\dim(T)-2\dim(S\cap T).
\]

\item $M_2$ denotes a minimum-distance decoder for $\mathcal C$ with respect to
the $\mathfrak N$-induced distance
\[
d_{\mathfrak N}(S,T)=\mathfrak N(S)+\mathfrak N(T)-2\,\mathfrak N(S\cap T).
\]
\end{enumerate}
\end{definition}

\begin{proposition}\label{prop:M1-implies-M2}
Let $\mathcal C \subseteq \mathcal G_q(N,k)$ be a constant-dimension code.
If $M_1$ correctly reconstructs the transmitted codeword from a received
subspace $U$, then $M_2$ also reconstructs it correctly from $U$.
\end{proposition}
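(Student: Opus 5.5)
The plan is to reduce both decoders to a comparison of intersection dimensions with the received space, and then use that $f(n)=\mathfrak N(S)$ for $\dim S=n$ is strictly increasing in $n$. Fix the received subspace $U$ and the transmitted codeword $S\in\mathcal C$. For every codeword $X\in\mathcal C$ we have $\dim X=k$. Hence
\[
d(U,X)=\dim U+k-2\dim(U\cap X),
\qquad
d_{\mathfrak N}(U,X)=f(\dim U)+f(k)-2f(\dim(U\cap X)).
\]
In both expressions, the only term depending on $X$ is a function of $i_X:=\dim(U\cap X)$: it is $-2i_X$ in the first case and $-2f(i_X)$ in the second. So $M_1$ chooses $X$ maximizing $i_X$, and $M_2$ chooses $X$ maximizing $f(i_X)$.

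The key step is to show that $f$ is strictly increasing on $\{0,1,\dots,N\}$, not merely isotone as in Corollary~\ref{Isotonic}. Strictness is needed because ties under $f$ would destroy uniqueness. From the closed formula,
\[
\frac{f(n+1)}{f(n)}=\frac{q^{n}\,(q^{n+1}-1)}{(n+1)(q-1)}.
\]
The factor $q^n$ comes from pulling $q^{i}$ out of each of the first $n$ factors, and the remaining terms collapse to $\prod_{i=1}^{n+1}(q^i-1)/(q-1)^{n+1}$. This ratio equals $q^n[n+1]_q/(n+1)$, and $[n+1]_q=1+q+\dots+q^n\ge n+1$. Hence $f(n+1)/f(n)\ge q^n\ge 1$, with strict inequality for $n\ge1$.

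The case $n=0$ is the main obstacle: $f(0)=1=f(1)$, so $f$ is not strictly increasing at the bottom. I would handle it with a small case analysis. If $M_1$ correctly and uniquely decodes, then $i_S>i_T$ for all $T\ne S$. If $i_S\ge 2$, or if $i_S=1$ and every other $i_T=0$, strictness may fail only in the pair $(0,1)$. In that situation $f(i_S)=f(i_T)$, so $M_2$ ties.

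I would therefore interpret "correctly reconstructs" so that this degenerate case is excluded, for example by requiring $\dim(U\cap S)\ge 2$. Alternatively, I would note that with the convention $\mathfrak N(\{0\})=0$ the gap closes and $f$ becomes strictly increasing everywhere. I would first check which convention makes the paper's claim true, and then state the needed hypothesis explicitly.

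Granting strict monotonicity on the relevant range, the argument finishes directly. $M_1$ succeeding means $i_S>i_T$ for every $T\in\mathcal C\setminus\{S\}$. Then $f(i_S)>f(i_T)$, so $d_{\mathfrak N}(U,S)<d_{\mathfrak N}(U,T)$. Thus $S$ is the unique $d_{\mathfrak N}$-minimizer, and $M_2$ outputs $S$.

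If success of $M_1$ is read as allowing ties resolved in favor of $S$, the same argument gives $i_S\ge i_T$. Isotonicity then yields $f(i_S)\ge f(i_T)$, so $S$ is also among the $M_2$ minimizers. This gives the same conclusion under the same tie-breaking convention.
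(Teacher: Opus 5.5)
Your argument is the paper's argument, run directly rather than by contradiction. For a constant-dimension code both decoders compare only $i_X=\dim(U\cap X)$, and monotonicity of $f$ carries the winner from $M_1$ to $M_2$. What you add is a check of strictness, and the problem you found is a real defect, not a technicality. The paper's proof asserts that $\mathfrak N$ is strictly increasing in dimension, which fails because $f(0)=f(1)=1$. It also assumes that a wrong output $S_j$ of $M_2$ satisfies $d_{\mathfrak N}(U,S_j)<d_{\mathfrak N}(U,S)$, whereas a minimum-distance decoder only guarantees $\le$. The implication the paper actually uses, $\mathfrak N(U\cap S_j)>\mathfrak N(U\cap S)\Rightarrow\dim(U\cap S_j)>\dim(U\cap S)$, needs only isotonicity. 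So the paper's argument proves exactly your last paragraph: $S$ is always among the $d_{\mathfrak N}$-minimizers.

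Where your write-up falls short is that it leaves the edge case open (``I would first check which convention\dots''). You should settle it.

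\emph{The convention is forced.} Setting $\mathfrak N(\{0\})=0$ is not the paper's convention. The paper's closed formula gives $f(0)=1$, which is also the count of the empty decomposition. Setting $f(0)=0$ would even break Theorem~\ref{thm:supermodular}(i) already for two distinct atoms.

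\emph{A counterexample.} With $f(0)=1$, the unique-decoding reading of the proposition is false. Take $q=2$ and $\mathcal C=\{S,T\}\subseteq\mathcal G_2(4,2)$ with $S=\langle e_1,e_2\rangle$ and $T=\langle e_3,e_4\rangle$. Transmit $S$ as $\{\langle e_1\rangle,\langle e_2\rangle,\langle e_1+e_2\rangle\}$ and erase the last two atoms, so $U=\langle e_1\rangle$. Then $d(U,S)=1<3=d(U,T)$, but $d_{\mathfrak N}(U,S)=d_{\mathfrak N}(U,T)=f(2)-1=2$.

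\emph{The sharp statement.} Your ratio $f(n+1)/f(n)=q^n[n+1]_q/(n+1)\ge q^n$ gives the correct version. Suppose $|\mathcal C|\ge 2$ and $M_1$ decodes $S$ uniquely. Then $M_2$ decodes $S$ uniquely if and only if $\dim(U\cap S)\ge 2$. When $\dim(U\cap S)=1$, all codewords tie under $d_{\mathfrak N}$.

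\emph{On tie-breaking.} Your phrase ``the same tie-breaking convention'' is safe only in the sense ``$S$ is among the minimizers.'' With a fixed deterministic rule shared by both decoders, the example above with $T$ listed before $S$ has $M_1$ output $S$ and $M_2$ output $T$.
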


\begin{proof}
Let $S\in\mathcal C$ be the transmitted codeword and let $U$ be the received
subspace. Since $M_1$ reconstructs $S$, for every $S_i\in\mathcal C$ with
$S_i\neq S$ we have
\begin{equation}\label{eq:D1-ineq}
d(U,S) < d(U,S_i).
\end{equation}
Because $\mathcal C$ has constant dimension $k$, inequality
\eqref{eq:D1-ineq} becomes
\[
\dim(U)+k-2\dim(U\cap S)
<
\dim(U)+k-2\dim(U\cap S_i).
\]
Canceling $\dim(U)+k$ and dividing by $-2$ yields
\begin{equation}\label{eq:dim-int-D1}
\dim(U\cap S) > \dim(U\cap S_i)
\qquad\text{for all } S_i\neq S.
\end{equation}

Suppose, for contradiction, that $M_2$ outputs some $S_j\in\mathcal C$ with
$S_j\neq S$. Since $M_2$ is a minimum-distance decoder under $d_{\mathfrak N}$,
we have
\begin{equation}\label{eq:D2-wrong}
d_{\mathfrak N}(U,S_j) < d_{\mathfrak N}(U,S).
\end{equation}
Expanding $d_{\mathfrak N}$ and canceling the common term $\mathfrak N(U)$,
inequality \eqref{eq:D2-wrong} becomes
\[
\mathfrak N(S_j)-2\mathfrak N(U\cap S_j)
<
\mathfrak N(S)-2\mathfrak N(U\cap S).
\]
Since $\dim(S_j)=\dim(S)=k$, we have $\mathfrak N(S_j)=\mathfrak N(S)$, and
hence
\[
\mathfrak N(U\cap S_j) > \mathfrak N(U\cap S).
\]
Because $\mathfrak N(\cdot)$ is strictly increasing with respect to dimension,
this implies
\begin{equation}\label{eq:dim-int-D2}
\dim(U\cap S_j) > \dim(U\cap S),
\end{equation}
which contradicts \eqref{eq:dim-int-D1} applied to $S_i=S_j$. Therefore, $M_2$
cannot output any codeword other than $S$, and must correctly reconstruct $S$.
\end{proof}

Now we present an explicit example in which the classical minimum-distance decoder $M_1$ fails to identify the transmitted subspace, while the atomic decoder $M_2$ successfully reconstructs the original message.

\begin{example}

Let $S_1, S_2,$ and $U$ be subspaces of $\mathbb{F}_{257}^4 = \langle e_1,e_2,e_3,e_4 \rangle$.
Consider the two codewords
\[
S_1 = \langle e_1, e_2 \rangle, \qquad
S_2 = \langle e_1, e_2, e_3 \rangle,
\]
so that $S_1 \subseteq S_2$.

Assume $S_1$ is transmitted through the Atomic Operator Channel as follows: one erasure removes the atom $\langle e_2 \rangle$ from $S_1$, and one insertion adds the atom $\langle e_3 \rangle$ (with $\langle e_3 \rangle \cap S_1 = \{0\}$). Thus the received subspace is
\[
U = \langle e_1, e_3 \rangle.
\]

\emph{Decoder based on the dimension distance $(d)$.}
Recall that
\[
d(S,T) = \dim(S) + \dim(T) - 2\dim(S \cap T).
\]

Then
\[
d(U,S_1) = 2 + 2 - 2\dim(\langle e_1 \rangle) = 2 + 2 - 2 = 2.
\]

Since $U \subseteq S_2$, we have $\dim(U \cap S_2) = \dim(U) = 2$, hence
\[
d(U,S_2) = 2 + 3 - 2 \cdot 2 = 1.
\]

Therefore, the minimum-distance decoder with respect to $d$ outputs $S_2$ (since $1 < 2$), even though the transmitted codeword was $S_1$.

\emph{Decoder based on minimal atomic decompositions $(d_{\mathfrak N})$.}
Define
\[
d_{\mathfrak N}(S,T)\;=\;\mathfrak N(\dim S)\;+\;\mathfrak N(\dim T)\;-\;2\,\mathfrak N\!\bigl(\dim(S\cap T)\bigr),
\]
where $\mathfrak N(t)$ denotes the number of minimal atomic decompositions of any $t$-dimensional subspace.

Let $A_1,A_2,A_3$ be subspaces of ${F}_{257}^4$ such that
\[
\dim(A_1)=1,\qquad \dim(A_2)=2,\qquad \dim(A_3)=3.
\]
Using the precomputed values
\[
\mathfrak N(A_1)=1,\qquad
\mathfrak N(A_2)=33153,\qquad
\mathfrak N(A_3)= 48,397,976,536,193
\]
we obtain
\[
d_{\mathfrak N}(U,S_1)
=\mathfrak N(A_2)+\mathfrak N(A_2)-2\mathfrak N(A_1)
=33153+33153-2
=66{,}304.
\]
Moreover, since $U\subseteq S_2$, we have $U\cap S_2=U$, so
\begin{align*}
d_{\mathfrak N}(U,S_2)
&=\mathfrak N(A_2)+\mathfrak N(A_3)-2\mathfrak N(A_2)
=48,397,976,536,193-33{,}153\\
&=48,397,976,503,040.
\end{align*}
Thus,
\[
d_{\mathfrak N}(U,S_1)=66{,}304<48,397,976,503,040=d_{\mathfrak N}(U,S_2),
\]
so the minimum-distance decoder with respect to $d_{\mathfrak N}$ correctly outputs $S_1$.
\end{example}

\section{Conclusion}\label{sec:conclusion}

In this work we introduced an atomic perspective on subspace coding for random
linear network coding, motivated by the observation that classical
dimension-based models do not fully capture how structured perturbations affect
subspaces in non-coherent networks. By formalizing the notion of minimal atomic
decompositions in the lattice of subspaces, we identified an intrinsic
combinatorial invariant—the number of minimal atomic decompositions—that reflects
internal redundancy and structural richness beyond dimension alone.

Leveraging this invariant, we defined a new metric on the lattice of subspaces
and introduced the Atomic Operator Channel, a transmission model in which
corruption acts through atomic-level insertions and erasures. We showed that the
resulting distance admits a natural minimum-distance decoding strategy and
established a unique-decoding guarantee analogous to the classical bound for
subspace codes. In the constant-dimension setting, we proved that successful
decoding under the classical subspace distance remains sufficient for successful
decoding under the atomic metric, ensuring compatibility with existing theory.
At the same time, we demonstrated through an explicit mixed-dimension example
that the atomic metric can strictly outperform dimension-based decoding by
resolving ambiguities that are invisible to classical distances.

From a broader perspective, our results highlight that redundancy in subspace
coding need not be expressed solely through dimension or ambient space expansion.
Internal combinatorial redundancy—manifested through multiple minimal atomic
representations—can be exploited to improve discrimination and robustness in
settings where degradation acts sparsely or directionally. The atomic framework
developed here preserves the tractability of the subspace coding paradigm while
exposing structural features that are otherwise collapsed by dimension-based
abstractions.

Several directions for future work emerge naturally. These include the design of
explicit code families optimized for the atomic metric, the development of
efficient decoding algorithms that exploit atomic structure, and a deeper
investigation of trade-offs between internal redundancy and code rate. More
broadly, the atomic viewpoint suggests new ways to refine channel models and
metrics in non-coherent communication, bridging combinatorial structure and
network coding practice.

\end{document}